\newenvironment{enumerate1}
{\begin{enumerate}[\upshape (1)]}
{\end{enumerate}}
\newtheorem{dummy}{dummy}[section]
\newtheorem{corollary}[dummy]{Corollary}
\newtheorem{proposition}[dummy]{Proposition}
\theoremstyle{definition}
\newtheorem{definition}[dummy]{Definition}
\newtheorem{examples}[dummy]{Examples}
\newtheorem{remark}[dummy]{Remark}
\newtheorem{assumption}[dummy]{Assumption}
\newcommand{\bA}{\mathbb{A}}
\newcommand{\bC}{\mathbb{C}}
\newcommand{\bG}{\mathbb{G}}
\newcommand{\bN}{\mathbb{N}}
\newcommand{\bQ}{\mathbb{Q}}
\newcommand{\bR}{\mathbb{R}}
\newcommand{\bZ}{\mathbb{Z}}
\newcommand{\cA}{\mathcal{A}}
\newcommand{\cL}{\mathcal{L}}
\newcommand{\cO}{\mathcal{O}}
\newcommand{\cS}{\mathcal{S}}
\newcommand{\cX}{\mathcal{X}}
\newcommand{\cY}{\mathcal{Y}}
\newcommand{\fX}{\mathfrak{X}}
\newcommand{\fY}{\mathfrak{Y}}
\newcommand{\Spec}{\mathrm{Spec}\,}
\newcommand{\Hom}{\mathrm{Hom}}
\newcommand{\Div}{\mathrm{Div}}
\renewcommand{\log}{\mathrm{log}}
\newcommand{\gp}{\mathrm{gp}}
\newcommand{\an}{\mathrm{an}}
\newcommand{\et}{\mathrm{\acute{e}t}}
\renewcommand{\top}{\mathrm{top}}
\newcommand{\esp}{\mathbf{Esp}}
\newcommand{\St}{\mathrm{Stack}}
\newcommand{\sym}{\mathrm{SymMon}}
\newcommand{\arr}{\to}
\newcommand{\pt}{\mathbf{pt}}
\newcommand{\sm}{_{\mathrm{sm}}}
\newcommand{\gm}{\bG_{\mathrm m}}
\newcommand{\double}{\rightrightarrows}
\newcommand\radice[2][\relax]{\hspace{-1.5pt}\sqrt[\uproot{2}#1]{#2}}
\renewcommand\projlim{\varprojlim}
\begin{document}

\title[A general formalism for logarithmic structures]{A general formalism\\for logarithmic structures}

\author{Mattia Talpo}
\address{Department of Mathematics\\
Simon Fraser University\\
8888 University Drive\\
Burnaby BC\\
V5A 1S6 Canada, and Pacific Institute for the Mathematical Sciences\\ 4176-2207 Main Mall \\ Vancouver BC\\ V6T 1Z4 Canada}

\email{mtalpo@sfu.ca}

\author[Angelo Vistoli]{Angelo Vistoli$^\dagger$}
\address{Scuola Normale Superiore\\Piazza dei Cavalieri 7\\
56126 Pisa\\ Italy}

\email{angelo.vistoli@sns.it}

\thanks{$^\dagger$Partially supported by research funds from the Scuola Normale Superiore}

\maketitle

\begin{abstract}
We extend the formalism of ``log spaces'' of \cite{molcho} to topoi equipped with a sheaf of monoids, and discuss Deligne--Faltings structures and root stacks in this context.
\end{abstract}

\setcounter{tocdepth}{1}

\tableofcontents

\section{Introduction}

\subsubsection*{Logarithmic structures} Logarithmic structures were introduced in the late '80s, in the work of Fontaine, Illusie, Deligne and Faltings, and studied systematically starting with the work of K. Kato \cite{kato}. The motivating philosophy was that, sometimes, a degenerate object behaves like a smooth one, when equipped with the correct log structure. Since then, these objects had a number of applications, especially (but not only) in the construction of nice compactifications of moduli spaces, where adding a log structure often produces compact moduli spaces, and/or isolates the ``main component'', i.e. the closure (or its normalization) of the locus of smooth objects (a few examples are \cite{katof, olsson3, olsson1, olsson2} - see also \cite[Sections 4 and 10]{abramovich}). Recently, log geometry was also employed to define Gromov-Witten invariants for singular targets \cite{gross-siebert-GW, abramovichgw, chen} and to formulate a version of mirror symmetry, in the form of the ``Gross--Siebert program'' \cite{gross-siebert, gross-siebert-II, gross-siebert-III}.

\subsubsection*{The language of symmetric monoidal functors} Traditionally (in Kato's language), a log structure on a scheme $X$ consists of a sheaf of monoids $M$ on the small \'etale site of $X$, together with a homomorphism $\alpha\colon M\to \cO_X$ that induces an isomorphism $\alpha^{-1}\cO_X^\times\to \cO_X^\times$ between units. An alternative point of view was introduced in \cite{borne-vistoli}, where a log structure is seen as a symmetric monoidal functor $L\colon A\to \Div_X$, called a ``Deligne--Faltings structure'', where $A$ is a sheaf of sharp monoids on the small \'etale site of $X$, and $\Div_X$ is the symmetric monoidal fibered category of line bundles with a global section $(L,s)$. This second language is sometimes more convenient; for example, it allows to define parabolic sheaves (this is the purpose for which it was created).

Also in \cite{borne-vistoli}, inspired by ideas of M. Olsson, the authors introduce a sequence of stacks $\radice[n]X$ for a fine saturated logarithmic scheme $X$; then in \cite{TV} we consider the ``infinite root stack'' $\radice[\infty]X := \projlim \radice[n]X$, and show how this reflects the geometry of $X$ very closely.

\subsubsection*{Generalizations to other geometric objects} It is readily apparent that Kato's definition makes sense in more general contexts, where the scheme $X$ is replaced for example by a ringed topos, and this was certainly remarked several times in the existing literature. In the recent preprint \cite{molcho} the authors introduce and systematically study log structures in Kato's language, in an abstract setting of a ``category of spaces''. In their language, this is a category of topological spaces with some extra structure and a monoid object $\bA^1$, satisfying a list of axioms. The sheaf $\cO_X$ is recovered as the sheaf of morphisms towards $\bA^1$, and the definition of a log structure is easily translated in this abstract setting. The authors apply this point of view to define ``manifolds with corners'' as certain log smooth ``positive differentiable spaces'', where the monoid object is the space of nonnegative real numbers, equipped with multiplication.

Subsequently, in \cite{TVnew}, whose aim is to show that the Kato--Nakayama space is a transcendental analogue of the infinite root stack, we also construct infinite root stacks for fine saturated log analytic spaces and log topological spaces.

\subsubsection*{The present paper} The main purpose of this note is to frame all of the constructions above in a general context. 

To start with, we study log structures on sites with a sheaf of monoids. We reprove several of the results of \cite{borne-vistoli} in this rarefied setup: for example, we prove the equivalence of Kato's definition with that of ``Deligne--Faltings structures'' of \cite{borne-vistoli}.  Furthermore, we show how the theory of charts also works, thus allowing us to define some of the basic properties of log structures (such as being fine, saturated, free, \dots).

Then we generalize the formalism of \cite{molcho} to sites, instead of topological spaces. This allows us to include, for example, the case of schemes, which was outside of the setup of \cite{molcho}. In this context we get a good theory of finite and infinite root stacks; locally, these can be constructed explicitly as quotients of groupoids in the site itself, or, in the case of infinite root stacks, as projective limits of such quotients.

The resulting notions for analytic and topological spaces are the same used in \cite{TVnew}.

Hopefully this should be useful for people looking to generalize log structures to other contexts: for example, it should give a good theory of log non-archimedean analytic spaces, and provide analogues of the Kato--Nakayama space in this context.

\subsubsection*{Outline}

In Section \ref{sec:topoi} we define log structures on a topos equipped with a monoid object (what we call a ``monoided topos''), both in the sense of Kato and in the ``Deligne--Faltings language'', and we point out the connection between the two notions.
Section \ref{subsubsec:coherent} is about local models for log structures: we define Kato charts and Deligne--Faltings charts, and identify the spaces and stacks that parametrize them (working in a ``category of spaces'' that satisfies some axioms, as in \cite{molcho}).
We discuss root stacks in Section \ref{sec:root.stacks}, and finally, in Section \ref{sec:2}, we describe some examples of the formalism developed in the paper.

\subsection*{Acknowledgements}

We are grateful to the referee for useful comments.

\section{Log structures}\label{sec:topoi}

We start by defining log structure on a topos equipped with a commutative monoid object. By ``topos'', we will always mean a Grothendieck topos, i.e. the category of sheaves of sets on some Grothendieck site. We will interchangeably refer to an ``objects in the topos $\fX$'' or to a ``sheaf on $\fX$''.

If $\fX$ is a topos, we denote by $\pt$ its terminal object. A \emph{covering of $\fX$} is a set $\{U_{i}\}$ of objects of $\fX$ such the morphism $\bigsqcup_{i}U_{i} \arr \pt$ is an epimorphism.
If $U$ is an object of $\fX$, we denote by $(\fX/U)$ the comma category, which is well known to be a topos. If $A$ is a monoid in $\fX$, we will denote by $A(U)$ the monoid of arrows $U \arr A$.

\begin{definition}
A \emph{monoided topos} is a pair $(\fX,\cO)$ consisting of a topos $\fX$ and a monoid $\cO$ in $\fX$.
\end{definition}

We denote by $\cO^\times$ the subobject of $\cO$ consisting of invertible sections.

\begin{remark}
The strange term ``monoided'' is a back-formation inspired from ``ringed''. We will reserve ``monoidal'' for symmetric monoidal categories.
\end{remark}

\begin{examples}
We will later give some examples for this formalism in concrete situations, that the reader might want to have in mind right away. In all of them, the operation on the sheaf is given by multiplication.

If $X$ is a scheme (or algebraic space) over a ring $k$, we can consider the topos of sheaves on the small \'etale site of $X$, with its structure sheaf.

If $X$ is a complex analytic space, we can consider the topos of sheaves on $X$ for the analytic topology, and its sheaf of analytic functions.

Finally, if $X$ is a topological space, we can consider the topos of sheaves on $X$ and the sheaf of continuous complex-valued functions.

All these sheaves of commutative rings are seen as sheaves of monoids via multiplication.
\end{examples}

\begin{definition}
A \emph{Kato pre-log structure} on a monoided topos $(\fX,\cO)$ is a pair $(M,\alpha)$ consisting of a monoid $M$ in $\fX$ and a homomorphism $\alpha\colon M\to \cO$.

A Kato pre-log structure is called a \emph{Kato log structure} if the restriction $\alpha|_{\alpha^{-1}\cO^\times}\colon \alpha^{-1}\cO^\times\to \cO^\times$ is an isomorphism.
\end{definition}

The notion of a log structure on a (locally) ringed topos has already appeared a few times in the literature. For example, see Chapter 12 of \cite{gabber-ramero}. Our notion (and the basic theory that we will develop) agrees with the usual one for locally ringed topoi: if $(\fX,\cO)$ is a locally ringed topos, then it also gives a monoided topos, by regarding $\cO$ as a monoid with respect to multiplication. A Kato (pre-)log structure on this monoided topos is exactly the same as a (pre-)log structure on the locally ringed topos $(\fX,\cO)$, as in \cite[Definition 12.1.1]{gabber-ramero}. Our definition is more general; it applies for example to the context of positive topological spaces (see Section \ref{sec:2}, or \cite[Section 5.7]{molcho}), where the ``structure sheaf'' is not a sheaf of rings, but only of monoids.

A general definition of log structures in the sense of Kato in a ``category of spaces'' appears in \cite{molcho}, although the formalism considered in that article does not allow for Grothendieck topologies that are not ``classical'' (for example the \'etale topology on a scheme).

There is a category of Kato log structures on a monoided topos $(\fX,\cO)$. Maps $(M,\alpha)\to (N,\beta)$ are homomorphisms of monoids $\phi\colon M\to N$ such that the composite $\beta\circ\phi$ coincides with $\alpha$.

As in the algebraic case, one can obtain a Kato log structure from a Kato pre-log structure by replacing $M$ with the amalgamated sum $M\oplus_{\alpha^{-1}\cO^\times}\cO^\times$ and the induced map to $\cO$. A Kato log structure $(M,\alpha)$ on $(\fX,\cO)$ will be called \emph{quasi-integral} if the action of $\cO^\times$ on $M$ is free (i.e. does not have any non-trivial stabilizer). The quotient $\overline{M} = M/\cO^\times$ is usually called the \emph{characteristic monoid} of the log structure.

Let $\Div_{\fX}$ denote the quotient stack $[\cO/\cO^\times]$ over the site $(\fX,\cO)$, where the group $\cO^\times$ acts on $\cO$ via the inclusion. This is a symmetric monoidal stack, with monoidal structure induced by the multiplication of $\cO$. The notation is taken from \cite{borne-vistoli} (see in particular Examples 2.8 and Remark 2.9), and reflects the fact that in the algebraic case $[\cO/\cO^\times]$ is the stack of ``generalized Cartier divisors'' (i.e. line bundles with a global section).

\begin{definition}
A \emph{Deligne-Faltings} (DF for short) \emph{log structure} on a monoided topos $(\fX,\cO)$ is a pair $(A,L)$ where $A$ is a sharp monoid in $\fX$ and $L\colon A\to \Div_{\fX}$ is a symmetric monoidal functor with trivial kernel.
\end{definition}

In the definition above, ``with trivial kernel'' means that if a section $a\in A(U)$ is such that $L(a)$ is invertible in $\Div_{\fX}$ (in the monoidal sense), then $a=0$. This definition is a particular case of that of a ``Deligne-Faltings object'' of \cite{borne-vistoli}.

There is a category of DF log structures on a monoided topos $(\fX,\cO)$. Maps $(A,L)\to (B,K)$ are given by homomorphisms of monoids $\phi\colon A\to B$ with a natural equivalence $\Phi\colon K\circ \phi\cong L$.

\begin{proposition}\label{prop:katovsDF}
Let $(\fX,\cO)$ be a monoided topos. Then there is an equivalence of categories between quasi-integral Kato log structures and DF log structures on $(\fX,\cO)$.
\end{proposition}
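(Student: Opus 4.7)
The plan is to explicitly construct functors in both directions and verify they are mutually quasi-inverse, closely following the strategy of \cite{borne-vistoli} but adapting it to the topos-theoretic setting.

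First I would go from quasi-integral Kato log structures to DF log structures. Starting with $(M,\alpha)$ quasi-integral, set $A := M/\cO^\times$ (sheafification). Quasi-integrality means exactly that the action of $\cO^\times$ on $M$ is free, so $M\to A$ is an $\cO^\times$-torsor in the topos $\fX$. The Kato condition $\alpha^{-1}\cO^\times \cong \cO^\times$ identifies the units of $M$ with $\cO^\times$, so $A$ is sharp. To define $L\colon A\to \Div_\fX = [\cO/\cO^\times]$, note that for any object $U$ of $\fX$ and section $a\in A(U)$, the preimage $M_a := M\times_A U$ is an $\cO^\times|_U$-torsor, and $\alpha$ restricts to an equivariant map $M_a \to \cO|_U$, so $(M_a,\alpha|_{M_a})$ is an object of $\Div_\fX(U)$. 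The monoid structure on $M$ induces coherent isomorphisms $L(a)\otimes L(b)\cong L(a+b)$, giving the symmetric monoidal structure. If $L(a)$ is invertible in $\Div_\fX$, then the map $M_a\to \cO$ factors through $\cO^\times$, so locally any lift of $a$ lies in $\alpha^{-1}\cO^\times = \cO^\times$, forcing $a=0$; this gives the trivial kernel condition.

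Next I would construct the inverse functor. Given a DF log structure $(A,L)$, form the $2$-fiber product
\[
M := A\times_{\Div_\fX} \cO
\]
along $L$ and the canonical atlas $\cO\to [\cO/\cO^\times] = \Div_\fX$, which is itself an $\cO^\times$-torsor. Since $\cO\to \Div_\fX$ is representable, $M$ is a sheaf on $\fX$; the pullback of the torsor structure makes $M\to A$ an $\cO^\times$-torsor, and the action on $M$ is automatically free, so $(M,\alpha)$ will be quasi-integral. Concretely, $M(U)$ is the set of pairs $(a,p)$ with $a\in A(U)$ and $p\in P_a(U)$ (where $L(a) = (P_a,s_a)$), and the projection $\alpha\colon M\to \cO$ is $(a,p)\mapsto s_a(p)$. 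The symmetric monoidal structure on $L$ yields a commutative monoid structure on $M$ via the contracted product $P_a\otimes P_b \cong P_{a+b}$, which is compatible with $\alpha$ by construction. The log structure condition follows from the trivial kernel hypothesis: if $\alpha(a,p) = s_a(p)\in\cO^\times$, then after localizing so that $p$ trivializes $P_a$, the map $s_a$ factors through $\cO^\times$, making $L(a)$ invertible and hence forcing $a=0$, so $(a,p)$ lies in $P_0\cong \cO^\times$.

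Finally I would check that these two constructions are mutually inverse up to canonical natural equivalence. For one direction, starting from $(M,\alpha)$, the sheaf $\overline M\times_{\Div_\fX}\cO$ is canonically identified with $M$ via the tautological map $M\to \overline M\times \cO$, $m\mapsto ([m],\alpha(m))$, which is an isomorphism because $M\to \overline M$ is an $\cO^\times$-torsor pulled back from $\cO\to\Div_\fX$. For the other direction, starting from $(A,L)$ and forming $M$, the induced quotient $M/\cO^\times$ recovers $A$ and the torsors $M_a$ recover $L(a)$ tautologically. The main obstacle, I expect, is not any single step of the argument but rather the bookkeeping required to verify that the symmetric monoidal structures transport faithfully in both directions and that all constructions globalize via descent in the topos; in particular, care is needed in describing sections of sheafified quotients and in unpacking $2$-fiber products with the quotient stack $[\cO/\cO^\times]$, since unlike in \cite{borne-vistoli} we cannot appeal to any underlying scheme-theoretic intuition and must work purely in the abstract topos $\fX$.
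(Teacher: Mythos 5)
Your proof is correct and follows essentially the same route as the paper: pass to the quotient $[M/\cO^\times]\to[\cO/\cO^\times]$ in one direction (your explicit torsor description of $L(a)$ is just this stacky quotient unwound) and take the fibered product $A\times_{\Div_\fX}\cO$ in the other. You simply fill in the verifications (sharpness, trivial kernel, the log structure condition, and the quasi-inverse check) that the paper leaves to the reader with a pointer to \cite[Theorem 3.6]{borne-vistoli}.
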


\begin{proof}
Given a quasi-integral Kato log structure $(M,\alpha)$, we produce a DF log structure by dividing $\alpha\colon M\to \cO$ by the action of $\cO^\times$ in the stacky sense. We obtain a morphism of symmetric monoidal stacks $\overline{\alpha}\colon \overline{M}=[M/\cO^\times]\to [\cO/\cO^\times]=\Div_{\fX}$, and by assumption $\overline{M}$ is equivalent to a sheaf. The pair $(\overline{M},\overline{\alpha})$ is a DF log structure.

Conversely, starting from a DF log structure $(A,L)$ we consider the fibered product $M=A\times_{\Div_{\fX}}\cO$ and the induced symmetric monoidal morphism $\alpha\colon M\to \cO$. Since $\cO\to \Div_{\fX}$ is a $\cO^\times$-torsor, the stack $M$ is equivalent to a sheaf (of monoids), and the pair $(M,\alpha)$ is a quasi-integral Kato log structure.

We leave it to the reader to check that these two constructions are quasi-inverses (see also \cite[Theorem 3.6]{borne-vistoli}).
\end{proof}

For the rest of the paper all Kato log structures will be quasi-integral,  we will drop the ``Kato'' and ``DF'', and just talk about log structures, switching freely between the two notions and notations.

Next, we point out that log structures can be pulled back along morphisms of monoided topoi. Let $(\fX,\cO_{\fX})$ and $(\fY,\cO_{\fY})$ be monoided topoi. 

\begin{definition}
A morphism of monoided topoi $(\fX,\cO_{\fX})\to (\fY,\cO_{\fY})$ is a morphism of topoi $F = (F_{*}, F^{-1})\colon \fX\to \fY$ together with a homomorphism of sheaves of monoids $F^{-1}\cO_\fY\to \cO_\fX$.
\end{definition}

Morphisms of monoided topoi can be composed in the obvious way.

Given a morphism $f\colon (\fX,\cO_{\fX})\to (\fY,\cO_{\fY})$ and a log structure $(M,\alpha)$ on $(\fY,\cO_{\fY})$, there is a log structure $f^{-1}(M,\alpha)$ obtained by pullback: the morphism $\alpha\colon M\to \cO_\fY$ induces a pre-log structure $f^{-1}\alpha\colon f^{-1}M\to f^{-1}\cO_\fY\to \cO_\fX$, and $f^{-1}(M,\alpha)$ is the associated log structure.

\begin{remark}
If one uses DF log structures, the pullback is simply defined by the composite $f^{-1}A\to f^{-1}\Div_{\cY}\to \Div_{\cX}$ (see \cite[Section 3.2]{borne-vistoli} for details---the arguments generalize without any difficulty), and there is no need to change the sheaf of monoids.
\end{remark}

If $(\fX,\cO_{\fX})$ is equipped with a log structure $(M,\alpha)$ and $(\fY,\cO_{\fY})$ with $(N, \beta)$, a morphism between the ``log monoided topoi'' $(\fX,\cO_{\fX}, M, \alpha)\to (\fY,\cO_{\fY}, N, \beta)$ is a morphism of monoided topoi $f\colon (\fX,\cO_{\fX})\to (\fY,\cO_{\fY})$ together with a morphism of log structures $f^{-1}(N,\beta)\to (M,\alpha)$.

Such a morphism of log monoided topoi is said to be $\emph{strict}$ if $f^{-1}(N,\beta)\to (M,\alpha)$ is an isomorphism.

\section{Coherent sheaves of monoids and charts}\label{subsubsec:coherent}

Let $A$ be a monoid in a topos $\fX$, $P$ a finitely generated monoid and $P \arr A(\fX) = \Hom(\pt, A)$ a homomorphism of monoids. If $\underline{P}$ is the locally constant sheaf given by $P$, there is an induced homomorphism of sheaves of monoids $\phi\colon \underline P \arr A$.

\begin{definition}\hfil

\begin{enumerate1}

\item A homomorphism $P \arr A(\fX)$ is a \emph{chart} if the induced homomorphism $\phi\colon \underline P \arr A$ is a cokernel, or, equivalently, if the induced homomorphism $\underline P/\ker \phi \arr A$ is an isomorphism.

\item An \emph{atlas} for $A$ is a covering $\{U_{i}\}$ of $\fX$ together with a chart $P_{i} \arr A(U_{i})$ for the pullback of $A$ to $(\fX/U_{i})$ for each $i$.

\item The object $A$ is called \emph{coherent} if it admits an atlas.
\end{enumerate1}

\end{definition}

\begin{definition}
A monoid $A$ in $\fX$ is called \emph{sharp}, or \emph{integral}, or \emph{torsion-free}, or \emph{saturated}, if for each object $U$ of $\fX$ the monoid $A(U)$ has the homonymous property.
\end{definition}

As usual, ``fine'' will mean ``integral and finitely generated'' (and also coherent, when applied to a monoid $A$ in $\fX$).

It is easy to see that if $\fX$ has enough points, then $A$ has any of the properties above if and only if for each point $p$ of $\fX$, the stalk $A_{p}$ has the homonymous property.

\begin{proposition}\label{prop:extend-to-chart}
Let $A$ be a sharp coherent monoid in $\fX$. Assume that $A$ is fine, or fine and torsion-free, or fine and saturated. Then $A$ has an atlas $\{P_{i} \arr A(U_{i})\}$ in which every $P_{i}$ has the homonymous property.
\end{proposition}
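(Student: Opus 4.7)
Plan of proof. Starting from an arbitrary atlas $\{Q_i \to A(U_i)\}$ provided by coherence, the strategy is to replace each $Q_i$ by a finitely generated monoid $P_i$ with the desired property, possibly after refining the covering $\{U_i\}$. In all three cases, the factorization $\underline{Q_i} \to \underline{P_i} \to A|_{U_i}$ (or an extension in the saturated case) will automatically be a chart because the map of sheaves of monoids factors a surjection through $\underline{P_i}$, and coherence/sharpness of $A$ ensures the kernel relation is correctly controlled.

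For the fine case, set $Q_i^{\mathrm{int}} := \mathrm{image}(Q_i \to Q_i^{\gp})$, which is integral and finitely generated. Since $A$ is integral on each $U_i$, any relation $a + c = b + c$ in $Q_i$ maps to $\phi(a) = \phi(b)$ in $A(U_i)$, so the chart $\phi\colon \underline{Q_i} \to A|_{U_i}$ factors through $\underline{Q_i^{\mathrm{int}}}$. The induced surjection $\underline{Q_i^{\mathrm{int}}} \to A|_{U_i}$ is a chart. For the fine and torsion-free case, after applying the integralization, further pass to the quotient $Q_i^{\mathrm{int}}/\mathrm{tors}$, where $\mathrm{tors}$ is the preimage of the torsion subgroup of $(Q_i^{\mathrm{int}})^{\gp}$; the hypothesis that $A$ is torsion-free ensures $\phi^{\gp}$ factors through $(Q_i^{\mathrm{int}})^{\gp}/\mathrm{tors}$, and correspondingly $\phi$ factors through the quotient monoid.

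The fine and saturated case is the main point. After reducing to fine charts as above, I form the saturation $P_i := (Q_i)^{\mathrm{sat}} \subseteq Q_i^{\gp}$; by Gordan's lemma $P_i$ is still finitely generated. Now $Q_i \hookrightarrow P_i$ is \emph{not} surjective, so we must \emph{extend} $\phi$ rather than factor it. Pick generators $x_1, \dots, x_r$ of $P_i$; each satisfies $n_j x_j \in Q_i$ for some $n_j \geq 1$. Using $\phi^{\gp}\colon \underline{Q_i^{\gp}} \to A^{\gp}|_{U_i}$, set $y_j := \phi^{\gp}(x_j) \in A^{\gp}(U_i)$, and note $n_j y_j = \phi(n_j x_j)$ is a section of $A$. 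Since $A$ is saturated, each $y_j$ is locally a section of $A$, so after refining $U_i$ to a covering $\{U_{i,\alpha}\}$ we may assume $y_j \in A(U_{i,\alpha})$ for all $j$. Sending $x_j \mapsto y_j$ gives a well-defined homomorphism $P_i \to A(U_{i,\alpha})$ because the relations among the $x_j$ in $P_i$ are relations in $Q_i^{\gp}$, which are respected by $\phi^{\gp}$.

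The main obstacle is ensuring that these extensions $\underline{P_i} \to A|_{U_{i,\alpha}}$ are genuine charts, i.e., cokernels (equivalently, surjections of sheaves of monoids with the correct quotient relation). Surjectivity is automatic: the original surjection $\underline{Q_i} \to A|_{U_{i,\alpha}}$ factors through $\underline{P_i}$, so the latter map is surjective too. For the quotient relation, by the equivalent formulation in the definition, one must check that $\underline{P_i}/\ker \to A|_{U_{i,\alpha}}$ is an isomorphism; this follows because two elements of $\underline{P_i}$ that map to the same element of $A$ can, locally, be pushed into $\underline{Q_i}$ after adding a suitable element of $\underline{Q_i}$ (using that $P_i$ is the saturation and $A$ is sharp with the given chart), reducing to the known identification for the $Q_i$-chart. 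The torsion-free analogue of this verification is easier since $Q_i^{\mathrm{int}}/\mathrm{tors}$ is a quotient. Putting these refined coverings together over all $i$ yields the desired atlas.
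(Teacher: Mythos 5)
Your proof is correct in outline but takes a genuinely different route from the paper's. You operate on the chart monoid itself---integralization, torsion quotient, saturation via Gordan's lemma---and transport the chart along these operations, refining the cover in the saturation step so that the fractional generators lift to sections of $A$. This is essentially the strategy of \cite[Proposition~3.15]{borne-vistoli}, which the paper mentions as available ``if $\fX$ has enough points''; your version is phrased with sections and coverings and in fact needs no points. The paper instead uses a much shorter trick: given any chart $P \arr A(U)$, it shows that the identity $Q = A(U) \arr A(U)$ is again a chart, and $Q$ has the relevant property tautologically, by the very definition of sharp/integral/torsion-free/saturated for a monoid in $\fX$; the whole verification is a single diagram chase through $\underline{P}/\ker\phi \arr \underline{Q}/\ker\psi \arr A$. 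What your approach buys is that the resulting chart monoids are visibly finitely generated (the paper's $A(U)$ is not obviously so); what the paper's buys is brevity and uniformity across the three cases. One step of yours should be spelled out: in the final cokernel verification for $P_i = Q_i^{\mathrm{sat}}$, writing $x = a-b$ and $y = c-d$ with $a,b,c,d \in Q_i$, one deduces that $a+d$ and $c+b$ have the same image in $A$, hence locally $a+d+h = c+b+k$ with $h,k$ in the kernel of the $Q_i$-chart; this reads as $x + (b+d) + h = y + (b+d) + k$, and one must then cancel $b+d$ using integrality of $P_i$ (inside $Q_i^{\gp}$) to reach the required relation $x + h = y + k$. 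It is this cancellation, not sharpness of $A$, that makes your reduction to the $Q_i$-chart legitimate; with that detail supplied the argument is complete.
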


If $\fX$ has enough points then one can give a proof along the lines of \cite[Proposition~3.15]{borne-vistoli}. 

\begin{proof}
Assume that $\phi\colon P\to A(U)$ is a chart for $A$ on $U$, and set $Q=A(U)$. We will show that $\psi\colon Q\to A(U)$ (the identity) is also a chart for $A$.

We have a diagram $\underline{P}/\ker \phi \to \underline{Q}/\ker\psi \to A$, and the composite is an isomorphism by assumption, so $\underline{Q}/\ker\psi \to A$ is surjective. Let us check that it is also injective.

For an object $V$ (on which we will localize further  several times), two sections $x$ and $y$ of $(\underline{Q}/\ker\psi)(V)$ that map to the same section of $A(V)$ are locally represented by two elements $a,b$ of $Q$, that map to the same element in $A(V)$. Since $\underline{P}\to A$ is surjective, locally the sections $a,b\in Q=A(U)$ are images of $p, p'\in P$. Since $\underline{P}/\ker\phi\to A$ is a cokernel and $p$ and $p'$ map to the same element in $A(V)$, there locally exist sections $h,k \in (\ker\phi)(V)$ such that $p+h=p'+k$ in $\underline{P}(V)$. If $h'$ and $k'$ are the images of $h$ and $k$ in $\underline{Q}(V)$ (and note that they are sections of $\ker\psi$), we have $a|_{V}+h'=b|_{V}+k'$. This implies that the two sections $x, y \in (\underline{Q}/\ker\psi)(V)$ considered at the beginning are locally equal, and shows that $\underline{Q}/\ker\psi \to A$ is injective.
\end{proof}

Now let $(\fX, \cO)$ be a monoided topos. 

\begin{definition}
A DF log structure $(A, L)$ on $(\fX, \cO)$ is \emph{coherent}, or \emph{fine}, or \emph{torsion-free}, or \emph{saturated}, if $A$ has the homonymous property.
\end{definition}

Notice that if $(A, L)$ is a DF log structure in $(\fX, \cO)$, then $A$ is automatically sharp. Hence, since every sharp fine saturated monoid is torsion-free, a fine saturated DF log structure is automatically torsion-free.

Given a symmetric monoidal functor $h\colon P\to \Div_\fX(\fX)$, by sheafifying and killing the kernel (in the monoidal sense) of the resulting functor we obtain a DF log structure $(P,h)^a$ (see Propositions 2.4 and  2.10 of \cite{borne-vistoli}). If $(A, L)$ is a DF log structure on $(\fX, \cO)$, $k\colon P \arr A(\fX)$ is a homomorphism of monoids, and $h = L \circ k\colon P \arr \Div_\fX(\fX)$ is the composite, we obtain a morphism of DF log structures $(P,h)^a \arr (A, L)$.

\begin{proposition}
The homomorphism $h\colon P \arr A(\fX)$ is a chart if and only if the induced morphism $(P,h)^a \arr (A, L)$ is an isomorphism.
\end{proposition}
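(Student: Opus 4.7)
The plan is to reduce the proposition to a direct comparison of the sharp monoids underlying $(P,h)^{a}$ and $(A,L)$, with the trivial-kernel hypothesis on $L$ providing the decisive ingredient. Everything else should be a formal unravelling of definitions.

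First, I would recall from the construction in \cite[Propositions~2.4 and~2.10]{borne-vistoli} that the sharp sheaf of monoids underlying $(P,h)^{a}$ is $\underline{P}/K$, where $K\subseteq \underline{P}$ is the subsheaf of sections whose image in $\Div_{\fX}$ under the sheafified composite $\underline{P}\to \Div_{\fX}$ is invertible in the monoidal sense, and that the induced morphism $(P,h)^{a}\to (A,L)$ is obtained at the level of sharp monoids by descending the sheafification $\underline{P}\to A$ of $k$ through $\underline{P}/K$.

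The key step is the identification $K = \ker(\underline{P}\to A)$ (preimage of $0$). For a local section $p\in\underline{P}(V)$, its image in $\Div_{\fX}(V)$ is $L(k(p))$, and this is invertible if and only if $k(p)=0\in A(V)$: the ``if'' direction is immediate since the symmetric monoidal functor $L$ sends $0$ to the unit, and the ``only if'' direction is exactly the trivial-kernel condition built into the definition of the DF log structure $(A,L)$. Hence the underlying morphism of sharp monoids of $(P,h)^{a}\to (A,L)$ is literally the canonical comparison $\underline{P}/\ker(\underline{P}\to A)\to A$, which by the definition of a chart is an isomorphism precisely when $k$ is a chart.

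This already yields the ``$\Leftarrow$'' direction, since an isomorphism of DF log structures restricts to an isomorphism of the underlying sharp monoids. For ``$\Rightarrow$'' one invokes the formal observation that a morphism $(\phi,\Phi)\colon (A',L')\to (A,L)$ of DF log structures is invertible as soon as its underlying monoid homomorphism $\phi$ is, the inverse being built from $\phi^{-1}$ together with the inverse of the natural equivalence $\Phi$. The only non-routine ingredient in the whole argument is the identification $K=\ker(\underline{P}\to A)$, so I do not anticipate any obstacle beyond this single clean use of the trivial-kernel hypothesis.
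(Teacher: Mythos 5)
Your proposal is correct and follows essentially the same route as the paper's own proof: identify the monoidal kernel $K$ of $\underline{P}\to\Div_{\fX}$ with $\ker(\underline{P}\to A)$ via the trivial-kernel condition on $L$, observe that the sharp monoid of $(P,h)^{a}$ is $\underline{P}/K$, and conclude using the fact that a morphism of DF log structures is an isomorphism precisely when its underlying homomorphism of sheaves of monoids is. The only cosmetic difference is that you spell out the kernel identification explicitly, while the paper asserts it in one line and justifies the last step by noting that $\Div_{\fX}$ is fibered in groupoids.
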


\begin{proof}
Let $K\subseteq \underline P$ be the kernel of $h$, which equals the kernel of the symmetric monoidal functor $\underline P \arr \Div_{\fX}$ induced by $h$; by construction, the sheaf of monoids of the DF log structure is $\underline P/K$. Hence, $h$ is a cokernel if and only if the morphism of DF log structures $(P,h)^a \arr (A, L)$ induces an isomorphism on sheaves of monoids. On the other hand since $\Div_{\fX}$ is fibered in groupoids we have that a morphism of DF log structures $(B, M) \arr (A, L)$ is an isomorphism if and only if the corresponding homomorphism $B \arr A$ is an isomorphism, and this concludes the proof.
\end{proof}

In our general context Kato charts also work very well.

\begin{definition}
Let $(M, \alpha)$ be a Kato log structure on $(\fX, \cO)$.

\begin{enumerate1}

\item A \emph{Kato chart} on $(M, \alpha)$ is a homomorphism of monoids $P \arr M(\fX)$ such that composite $P \arr M(\fX) \arr \overline{M}(\fX)$ is a DF chart.

\item A \emph{Kato atlas} for $(M, \alpha)$ is a covering $\{U_{i}\}$ of $\fX$ together with a chart $P_{i} \arr M(U_{i})$ for the pullback of $(M, \alpha)$ to $(\fX/U_{i})$ for each $i$.

\end{enumerate1}
\end{definition}

One can give examples of fine log structures that do not admit Kato atlases; however, we have the following.

\begin{proposition}\label{prop:local-lifting}

Let $P$ be a fine torsion-free monoid. Any symmetric monoidal functor $P \arr \Div_{\fX}(\fX)$ lifts, locally on $\fX$, to a homomorphism $P \arr \cO(\fX)$.

\end{proposition}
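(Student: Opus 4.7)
The plan is to work locally on $\fX$, trivialize the $\cO^{\times}$-torsors attached to a finite generating set of $P$, and then use torsion-freeness to correct these trivializations into an honest monoid homomorphism.

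First, since $P$ is finitely generated, I would choose generators $p_{1}, \dots, p_{n}$, giving a surjection $\pi \colon \bN^{n} \to P$ and, after group completion, a short exact sequence
\[
0 \to K \to \bZ^{n} \to P^{\gp} \to 0.
\]
Because $P$ is fine and torsion-free, $P^{\gp}$ is a finitely generated torsion-free abelian group, hence free, so this sequence splits. Next I would replace $\fX$ by a covering on which the $\cO^{\times}$-torsor underlying each $L(p_{i})$ is trivial, which is possible since there are only finitely many $p_{i}$. On such an object $U$, choosing trivializations yields elements $x_{i} \in \cO(U)$ representing $L(p_{i})$ in $\Div_{\fX}(U)$; the tuple $(x_{i})$ defines a monoid homomorphism $\bN^{n} \to \cO(U)$, and the task reduces to rescaling the $x_{i}$ by units so that this map descends to $P \to \cO(U)$.

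The crux is to identify the obstruction as a group homomorphism $\eta \colon K \to \cO^{\times}(U)$. For any $a, b \in \bN^{n}$ with $\pi(a) = \pi(b)$, the symmetric monoidal structure of $L$ furnishes a canonical isomorphism $L(\pi(a)) \cong L(\pi(b))$ which, read through our trivializations, becomes multiplication by a unique unit $\epsilon_{a,b} \in \cO^{\times}(U)$ satisfying $x^{a} = \epsilon_{a,b}\, x^{b}$ (multi-index notation). Using the integrality of $P$ (so that $P \hookrightarrow P^{\gp}$) one checks that $\epsilon_{a,b}$ depends only on $k = a - b \in K$, and that the resulting $\eta \colon K \to \cO^{\times}(U)$ is additive. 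Exploiting the splitting of the exact sequence above, $\eta$ extends to some $\tilde{\eta} \colon \bZ^{n} \to \cO^{\times}(U)$; setting $u_{i} = \tilde{\eta}(e_{i})^{-1}$ and replacing each $x_{i}$ by $x_{i}' = u_{i} x_{i}$, a direct computation shows that $(x')^{a} = (x')^{b}$ whenever $\pi(a) = \pi(b)$, so $(x_{i}')$ defines the required homomorphism $P \to \cO(U)$ lifting $L|_{U}$.

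The hard part will be making the cocycle $\eta$ well-defined: the fact that $\epsilon_{a,b}$ depends only on $a-b$ and is additive in $K$ requires a careful unraveling of the coherence of the monoidal structure of $L$, together with the integrality of $P$. Torsion-freeness enters only in the final step --- it is exactly what makes $P^{\gp}$ free, hence projective, so that the sequence splits and $\eta$ extends from $K$ to all of $\bZ^{n}$; without this hypothesis, the obstruction could not be absorbed by rescaling.
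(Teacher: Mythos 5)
Your argument is correct, but it takes a genuinely more hands-on route than the paper. The paper works intrinsically: it pulls back $\cO \arr \Div_{\fX}$ along $\underline{P} \arr \Div_{\fX}$ to get an $\cO^{\times}$-torsor $E$ over $\underline{P}$, groupifies to obtain an extension $0 \arr \cO^{\times} \arr E^{\gp} \arr \underline{P}^{\gp} \arr 0$, splits it locally because $\underline{P}^{\gp}$ is a free abelian group object (this is where fineness and torsion-freeness enter), and then observes that $E$ is the preimage of $\underline{P}$ in $E^{\gp}$, so the splitting restricts to the desired section $\underline{P} \arr E \arr \cO$. You instead fix a presentation $\bN^{n} \twoheadrightarrow P$, trivialize the finitely many torsors $L(p_{i})$, and measure the failure of the resulting map $\bN^{n} \arr \cO(U)$ to descend by a homomorphism $\eta\colon K \arr \cO^{\times}(U)$, which you kill by rescaling after extending $\eta$ along the split inclusion $K \subseteq \bZ^{n}$. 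Both proofs rest on the same engine --- freeness of $P^{\gp}$ makes an obstruction valued in $\cO^{\times}$ go away, while integrality identifies the monoid-level relations with the group-level kernel $K$ --- but your version carries the extra verification you flag yourself: that $\epsilon_{a,b}$ depends only on $a-b$ and is additive. That check does go through (coherence of $L$ gives $\epsilon_{a+c,\,b+d} = \epsilon_{a,b}\,\epsilon_{c,d}$, and then $a+b'=a'+b$ forces $\epsilon_{a,b}=\epsilon_{a',b'}$), so there is no gap; it is exactly the bookkeeping that the paper's groupification argument packages away. The trade-off is that your proof is more elementary and explicit, while the paper's is shorter and coordinate-free.
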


\begin{proof}

Consider the induced homomorphism $\underline{P} \arr \Div_{\fX}$, and call the $E$ the pullback of $\cO$ to $\underline{P}$, which is an integral sheaf of monoids. Then $\cO^{\times}$ is contained in $E$, acts freely on $E$, and $E/\cO^{\times} = \underline{P}$. Passing to the associated sheaves of groups is a colimit-preserving functor, being a left adjoint. Furthermore, since $E$ is integral we have that the composite $\cO^{\times} \arr E \arr E^{\gp}$ is injective; so we have an exact sequence
   \[
   0 \arr \cO^{\times} \arr E^{\gp} \arr \underline{P}^{\gp} \arr 0\,.
   \]
But $\underline{P}^{\gp}$ is a free abelian group object in $\fX$, because $P$ is fine and torsion-free; hence by passing to a covering of $\fX$ we may assume that there is a splitting homomorphism $\underline{P}^{\gp} \arr E^{\gp}$. Since $E$ is an $\cO^{\times}$-torsor over $\underline{P}$, and $E^{\gp}$ is an $\cO^{\times}$-torsor over $\underline{P}^{\gp}$, we have that $E$ is the inverse image of $\underline{P} \subseteq \underline{P}^{\gp}$ inside $E^{\gp}$. It follows that the splitting $\underline{P}^{\gp} \arr E^{\gp}$ restricts to a splitting $\underline{P} \arr E$; the composite $\underline{P} \arr E \arr M$ gives a homomorphism $P \arr M(\fX)$, which is the desired Kato chart.
\end{proof}

\begin{corollary}
A fine torsion-free log structure has a Kato atlas.
\end{corollary}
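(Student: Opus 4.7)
The plan is to reduce everything to the three results just established: Proposition \ref{prop:katovsDF}, Proposition \ref{prop:extend-to-chart}, and Proposition \ref{prop:local-lifting}. Given a fine torsion-free log structure $(M,\alpha)$ on $(\fX,\cO)$, I pass to the equivalent DF log structure $(A,L)$ with $A = \overline{M}$. By hypothesis $A$ is fine and torsion-free, so Proposition \ref{prop:extend-to-chart} supplies an atlas $\{U_i\}$ of $\fX$ together with DF charts $k_i\colon P_i \arr A(U_i)$ where each $P_i$ is itself fine and torsion-free.

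Next I would like to upgrade each DF chart $k_i$ to a Kato chart. Restricting $L$ to $(\fX/U_i)$ and composing with $k_i$ gives a symmetric monoidal functor $L_i \circ k_i\colon P_i \arr \Div_{\fX/U_i}(U_i)$. Since $P_i$ is fine and torsion-free, Proposition \ref{prop:local-lifting} applies to the monoided topos $(\fX/U_i, \cO|_{U_i})$: after passing to a refinement $\{V_{ij}\}$ of $U_i$, the functor $L_i \circ k_i|_{V_{ij}}$ lifts to a homomorphism of monoids $\widetilde k_{ij}\colon P_i \arr M(V_{ij})$, whose composite with $M \arr \Div_{\fX}$ agrees (up to the natural equivalence built in Proposition \ref{prop:local-lifting}) with the DF chart $k_i|_{V_{ij}}$ induced by $L \circ k_i$.

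To conclude, I need to verify that each $\widetilde k_{ij}$ is a Kato chart in the sense of the definition, i.e.\ that the composite $P_i \arr M(V_{ij}) \arr \overline{M}(V_{ij}) = A(V_{ij})$ is a DF chart. But this composite is exactly the restriction of the original $k_i$ to $V_{ij}$, which is a DF chart because the property of being a cokernel of sheaves is local and $\{V_{ij}\} \arr U_i$ is a covering. Hence the collection $\{V_{ij}\}$, together with the homomorphisms $\widetilde k_{ij}\colon P_i \arr M(V_{ij})$, forms a Kato atlas for $(M,\alpha)$.

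The only mildly delicate step is bookkeeping the compatibility at the last stage: one must use that Proposition \ref{prop:local-lifting} produces a lift whose image in $\Div_{\fX}$ recovers (up to canonical $2$-isomorphism) the symmetric monoidal functor one started with, so that the underlying map to $\overline{M}$ is literally the original DF chart rather than a new map requiring separate analysis. Once this compatibility is noted, no further work is needed.
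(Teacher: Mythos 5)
Your proof is correct and follows essentially the same route as the paper's: pass to the DF log structure, use Proposition \ref{prop:extend-to-chart} to get local DF charts from fine torsion-free monoids, and then apply Proposition \ref{prop:local-lifting} to lift them to Kato charts. The extra bookkeeping you supply (that the lift's composite with $M \arr \overline{M}$ recovers the original DF chart, and that the chart condition is local) is exactly what the paper leaves implicit, and it checks out against the construction in the proof of Proposition \ref{prop:local-lifting}.
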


\begin{proof}
Let $(M, \alpha)$ be a Kato log structure on $(\fX, \cO)$; call $(\overline{M}, L)$ the corresponding DF log structure. We may assume that $(\overline{M}, L)$ has a DF chart $P \arr \overline{M}(\fX)$; by Proposition~\ref{prop:extend-to-chart} we may assume that $P$ is integral and torsion-free. Then the result follows from Proposition~\ref{prop:local-lifting}.
\end{proof}

\subsection{Spaces of Kato charts and stacks of DF charts}\label{sec:charts.spaces}

\begin{assumption}\label{assumption:esp}
We fix a category $\esp$ admitting finite inverse limits, equipped with a subcanonical Grothendieck topology, a monoid object $\bA^1$ in $\esp$, and a class of arrows in $\esp$, called \emph{small}.

We assume the following conditions.

\begin{enumerate1}

\item An isomorphism is small.

\item The class of small arrows is closed under composition.

\item The class of small arrows is closed under pullback, that is, if $X \arr Y$ and $Y' \arr Y$ are arrows and $X \arr Y$ is small, the projection $Y'\times_{Y}X \arr Y'$ is also small.

\item If $U$ is an object of $\esp$, any covering of $U$ has a refinement $\{U_{i} \arr U\}$ in which every map $U_{i} \arr U$ is small.

\end{enumerate1}
\end{assumption}

We think about the objects of $\esp$ as some sort of ``spaces'' (as in \cite{molcho, molcho2}---the notation is borrowed from there). 

In Section \ref{sec:2}, we will consider in particular the following three classes of examples.

\begin{enumerate1}

\item $\esp$ is the category of schemes over a fixed base ring $k$. The topology is the étale topology, $\bA^{1} = \bA^{1}_{k}$, and small arrows are étale maps.

\item $\esp$ is the category of analytic spaces, the topology is the classical topology, $\bA^{1} = \bC$, and small arrows are open embeddings.

\item $\esp$ is the category of topological spaces, the topology is the classical topology, $\bA^{1} = \bC$, and small arrows are open embeddings.

\end{enumerate1}

Denote by $\cO$ the sheaf of morphisms towards $\bA^1$. If $\cO^{\times} \subseteq \cO$ is the subsheaf of invertible sections, then $\cO^{\times}$ is represented by a subobject $\gm \subseteq \bA^{1}$: if $\pt$ is the terminal object of $\esp$ and $\pt \arr \bA^{1}$ is the morphism corresponding to the identity in $\bA^{1}$, then $\gm$ is represented by the fibered product $(\bA^{1} \times \bA^{1})\times_{\bA^{1}}\pt$, where the morphism $\bA^{1} \times \bA^{1} \arr \bA^{1}$ is the operation in $\bA^{1}$.

Given an object $S\in \esp$, we can consider the induced small site $S\sm$ of $S$: its objects are small arrows $U \arr S$, with the obvious morphisms, and the topology is induced by the topology of $\esp$. We denote by $\fX_S$ the topos of sheaves on $S\sm$, together with the sheaf $\cO_S$ of morphisms to the monoid object $\bA^1$. This gives a monoided topos $(\fX_S,\cO_S)$, and we can consider log structures on it, morphisms between these objects, and so on.

If $f\colon S \arr T$ is an arrow in $\esp$, fibered product induces a functor $T\sm \arr S\sm$, which in turn gives a morphism of topoi $(f_{*}, f^{-1})\colon \fX_{S}\arr \fX_{T}$.

A \emph{log space} $(S,M,\alpha)$ in $\esp$ will be an object $S \in \esp$, together with a log structure on the monoided topos $\fX_S$ as above (we used the notation for a Kato log structure, but we can equivalently use DF log structures). A morphism $(S,M,\alpha)\to (T,N,\beta)$ of log spaces is a morphism $f\colon S\to T$ in $\esp$, together with a morphism $f^{-1}(N,\beta)\to (M,\alpha)$ of log structures over $S$ (i.e. for the topos $\fX_S$).

\subsubsection*{Spaces of Kato charts}

Suppose that $P$ is a finitely generated monoid. Consider the sheaf of monoids on $\esp$ that sends an object $S$ into the monoid $\Hom\bigl(P, \cO(S)\bigr)$; this is representable by a monoid object $\bA(P)$ in $\esp$, which is constructed as follows (see also \cite[Section 4.4]{molcho}, from where, again, we also borrow the notation). 

If $P = \bN^{r}$ is a free monoid, then $\Hom\bigl(P, \cO(S)\bigr) = \cO(S)^{r}$, so $\bA(P)= (\bA^{1})^{r}$. In general, by  R\'edei's Theorem \cite[Theorem 72]{redei} the monoid $P$ is finitely presented, so it is the coequalizer of two homomorphisms $\bN^{s} \double \bN^{r}$. Hence $\Hom\bigl(P, \cO(S)\bigr)$ is represented by the equalizer the two arrows of $\bA(\bN^{r}) \double \bA(\bN^{s})$ induced by the homomorphisms above, which exists by hypothesis.

The definition of $\bA(P)$ equips it with a natural log structure, induced by the universal homomorphism $P\to \cO(\bA(P))$. A morphism of log spaces $(S,M,\alpha) \to \bA(P)$ corresponds to compatible homomorphisms $P\to \cO(S)$ and $P\to M(S)$, and such a morphism is strict if and only if $P\to M(S)$ is a Kato chart for $(M,\alpha)$ on $S$.

The group of invertible elements in $\Hom\bigl(P, \cO(S)\bigr)$ is
   \begin{align*}
   \Hom\bigl(P, \cO^{\times}(S)\bigr) &= \Hom\bigl(P^{\gp}, \cO^{\times}(S)\bigr)\\
      &= \Hom\bigl(P^{\gp}, \cO(S)\bigr)\,
   \end{align*}
and hence the group functor that sends $S$ into $\Hom\bigl(P, \cO^{\times}(S)\bigr)$ is represented by $\bG(P) := \bA(P^{\gp})$. There is an obvious action of $\bG(P)$ on $\bA(P)$.

\subsubsection*{Stacks of DF charts}
Consider the stack $\Div = [\cO/\cO^{\times}] = [\bA^{1}/\gm]$ over the site $\esp$, and a finitely generated monoid $P$. There is a stack $\cA(P)$ over $\esp$, whose sections over an object $S$ of $\esp$ consist of symmetric monoidal functors $P \arr \Div(S)$. 

\begin{proposition}\label{prop:stack.DF.charts}
Suppose that $P$ is fine and torsion-free. Then the stack $\cA(P)$ is the quotient $[\bA(P)/\bG(P)]$.
\end{proposition}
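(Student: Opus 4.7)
The plan is to construct a morphism $[\bA(P)/\bG(P)] \to \cA(P)$ out of the tautological chart on $\bA(P)$, and then to verify it is an equivalence by checking essential surjectivity on a cover and full faithfulness on sections. To set up the map, I would start from the universal homomorphism $P \to \cO(\bA(P))$ that defines the space $\bA(P)$ and compose with $\cO \to \Div$ to produce a symmetric monoidal functor $P \to \Div(\bA(P))$, i.e.\ a section of $\cA(P)(\bA(P))$, giving a morphism $\bA(P) \to \cA(P)$. The 2-cell needed to descend this to the quotient stack is built from the following observation: for $S \in \esp$, $\phi \in \bA(P)(S)$ and $u \in \bG(P)(S)$, the product $(u\phi)(p) = u(p)\phi(p)$ defines the same symmetric monoidal functor to $\Div(S)$ as $\phi$, with canonical monoidal natural isomorphism given by $u(p) \in \cO^{\times}(S)$. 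This produces the factorization $[\bA(P)/\bG(P)] \to \cA(P)$.

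For essential surjectivity, a section of $\cA(P)(S)$ is a symmetric monoidal functor $L\colon P \to \Div(S)$, and I need to find a local lift to a homomorphism $P \to \cO(S)$, i.e.\ a local section of $\bA(P)$ whose image is $L$. This is exactly Proposition~\ref{prop:local-lifting}, and is precisely where the torsion-free hypothesis enters (through $P^{\gp}$ being free abelian, which lets one split the extension appearing in that proof). The fineness hypothesis, on the other hand, is already used to make sense of $\bA(P)$ as an object of $\esp$ via R\'edei's theorem.

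For full faithfulness, given two sections $\phi, \phi' \in \bA(P)(S)$, morphisms in $[\bA(P)/\bG(P)](S)$ between them are by definition elements $u \in \bG(P)(S)$ with $u\phi = \phi'$. A morphism in $\cA(P)(S)$ between the associated symmetric monoidal functors is a monoidal natural transformation, which for each $p \in P$ is an isomorphism in the groupoid $\Div(S) = [\cO/\cO^{\times}](S)$ from $\phi(p)$ to $\phi'(p)$, i.e.\ an element $u(p) \in \cO^{\times}(S)$ with $u(p)\phi(p) = \phi'(p)$; the monoidal compatibility condition translates into $u\colon P \to \cO^{\times}(S)$ being a homomorphism of monoids, which is exactly a section of $\bG(P)(S)$. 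The two descriptions agree, so the functor is fully faithful. Combined with essential surjectivity, this gives the desired equivalence.

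The only substantive obstacle is essential surjectivity, which has already been handled by Proposition~\ref{prop:local-lifting}; the remaining matching of morphisms is a direct unpacking of the definition of the quotient stack and of $\Div = [\cO/\cO^{\times}]$.
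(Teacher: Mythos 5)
Your proposal is correct and follows essentially the same route as the paper: the key input for surjectivity is Proposition~\ref{prop:local-lifting} (exactly where the paper uses it, to show $\bA(P)\to\cA(P)$ is an epimorphism of stacks), and your full-faithfulness check---identifying monoidal natural transformations with homomorphisms $P\to\cO^{\times}(S)$---is precisely the paper's identification of the groupoid $\bA(P)\times_{\cA(P)}\bA(P)\double\bA(P)$ with the action groupoid $\bA(P)\times\bG(P)\double\bA(P)$. The only difference is presentational: you phrase the argument as essential surjectivity plus full faithfulness of a map out of the quotient stack, while the paper phrases it as an isomorphism of groupoid presentations.
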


This is the analogue of \cite[Proposition 3.25]{borne-vistoli}.

\begin{proof}
Denote by $\pi\colon \cO \arr \Div$ the projection. There is an obvious map $\pi_{P}\colon \bA(P) \arr \cA(P)$ that sends a homomorphism $\phi\colon P \arr \cO(S)$ into the composite $\pi\circ \phi$. Since $\pi_{P}\colon \bA(P) \arr \cA(P)$ is an epimorphism of stacks (that is, it admits local sections) by Lemma~\ref{prop:local-lifting} (this is where we are using the torsion-free condition), we have that $\cA(P)$ is the quotient of the groupoid
   \[
   \bA(P)\times_{\cA(P)} \bA(P)\double \bA(P)\,;
   \]
hence it enough to prove that the groupoid above is isomorphic to the groupoid
   \[
   \bA(P) \times \bG(P) \double \bA(P)
   \]
defined by the action of $\bG(P)$. 

Suppose that $(\phi, \alpha)$ is an element of $\bigl(\bA(P) \times \bG(P)\bigr)(S)$; in other words, $\phi\colon P \arr \cO(S)$ and $\alpha\colon P \arr \cO^{\times}(S)$ are homomorphisms of monoids. Then the product $\phi\alpha\colon P \arr \cO(S)$ is another section of $\bA(P)(S)$, and $\alpha$ gives an isomorphism of $\pi_{P} \circ \phi$ with $\pi_{P} \circ (\phi\alpha)$; so we obtain an element $(\phi, \phi\alpha, \alpha) \in \bigl(\bA(P)\times_{\cA(P)} \bA(P)\bigr)(S)$. This gives a map from $\bA(P) \times \bG(P)$ to  $\bA(P)\times_{\cA(P)} \bA(P)$, which is easily checked to be an isomorphism.
\end{proof}

A DF structure on an object $\cS$ of $\St_\esp$ (the 2-category of stacks over the site $\esp$) can be defined as a collection of DF structures on all pairs  $(S,f)$ of spaces $S\in \esp$ with a map $f\colon S\to \cS$, and compatible isomorphisms between the various pullbacks. The fact that $$\Hom_{\St_\esp}(S,\cA(P))=\Hom_\sym(P,\Div(S))$$ for $P$ torsion-free, equips $\cA(P)$ with a tautological DF structure, and makes it the stack of DF charts for log spaces in $\esp$. In other words, DF charts on a log space $(S,M,\alpha)$ in $\esp$ from a fine torsion-free monoid $P$ correspond exactly to strict morphisms $S\to \cA(P)$.

\begin{remark}
As suggested by the referee, it is natural to ask if there is a version of Olsson's classifying stack of log structures \cite{Ols} in this abstract context. In fact, the \'etale cover of $\cL og_S$ described in \cite[Section 5]{Ols} is made up precisely of the quotient stacks $ \cA(P)=[\bA(P)/\bG(P)]=\big[ \Spec \bZ[P]\,/\, \Spec \bZ[P^\gp]\big]$, in the context of log schemes over some base.

In our axiomatic context, for a fine saturated log space $(S,M,\alpha)$ in $\esp$ one can certainly consider the stack $\cL og_{(S,M,\alpha)}$ over $\esp$ classifying fine saturated log spaces over $(S,M,\alpha)$. Its objects are morphisms of fine saturated log spaces $(T,N,\beta)\to (S,M,\alpha)$, and arrows are strict morphisms of log spaces, compatible with the maps to $(S,M,\alpha)$. Our reason for restricting to saturated log structures right away (contrarily to what is done in \cite{Ols}) is the presence of the torsion-freeness assumption in Proposition \ref{prop:stack.DF.charts}. It is possible that this assumption can be removed.

For every small map $U\to S$, fine monoid $P$ with a chart $P\to M(U)$ for the log space $(U,M|_U, \alpha|_U)$, and morphism of fine saturated monoids $P\to Q$, there is a natural morphism of stacks $\cA(Q)\times_{\cA(P)} U\to \cL og_{(S,M, \alpha)}$, sending an object $(T\to \cA(Q), T\to U)$ to the morphism of log spaces $(T, N, \beta)\to (U,M|_U, \alpha|_U)\to (S,M,\alpha)$, where $(N,\beta)$ is the log structure on $T$ induced by the map $T\to \cA(Q)$. Essentially by the existence of local charts and Proposition \ref{prop:stack.DF.charts}, the induced map
$$
\coprod_{(U,\,P\to M(U), \,P\to Q)} \cA(Q)\times_{\cA(P)} U \xrightarrow{\;\;\;\;\;\;\; \Phi \;\;\;\;\;\;\;}  \cL og_{(S,M, \alpha)}
$$
is surjective. In the case of log schemes, it is also representable and \'etale \cite[Corollary 5.25]{Ols}.

It is possible that something more can be said in general, but it seems likely that at this point one has to make more assumptions on the topology of $\esp$. For example, if $\esp$ is complex analytic spaces with the classical topology and small maps are open embeddings, the map $\Phi$ will most likely only be \'etale.

We stop here on this path for the moment, and leave it to possible future development.
\end{remark}

\section{Root stacks}\label{sec:root.stacks}

We can extend the definition of root stacks given in \cite{borne-vistoli} to the axiomatic setting. Let $(S, A, L)$ be a log space in a category $\esp$ as in Assumption \ref{assumption:esp}. Let $A \arr B$ be a homomorphism of sheaves of monoids in $\fX_{S}$.

The following definition is an obvious extension of \cite[Definition 4.16]{borne-vistoli} to our situation; see this paper if more details are needed.

\begin{definition}
The \emph{root stack} $\radice[B]{(S, A, L)}$ is the stack over $\esp$ defined as follows.

An object of $\radice[B]{(S, A, L)}$ over a space $T$ is a triple $(f, N, \alpha)$, where $f\colon  T \arr S$ is an arrow in $\esp$, $N\colon f^{-1}B\arr \Div_{T}$ is a symmetric monoidal functor with trivial kernel, and $\alpha$ is an isomorphism of log structures between the restriction of $(f^{-1}A, N|_{f^{-1}A})$ and $f^{-1}(A, L)$.

The arrows are defined in the obvious way.
\end{definition}

Notice that if $A \arr B \arr C$ are maps of sheaves of monoids on $S$, we have obvious functors $\radice[C]{(S, A, L)} \arr \radice[B]{(S, A, L)}$.

One is usually interested in the following two classes of examples.

Assume that $A$ is saturated. If $n$ is a positive integer, consider the subsheaf $\frac{1}{n}A \subseteq A^{\gp}\otimes \bQ$, with the natural embedding $A \subseteq \frac{1}{n}A$. (Of course we can think of $\frac{1}{n}A$ as $A$ itself, with the map $A \arr \frac{1}{n}A$ given by multiplication by $n$.) We denote the resulting stack by $\radice[n]{(S, A, L)} = \radice[n]S$. Notice that $\radice[n]S$ comes with a tautological log structure, whose sheaf of monoids is the pullback of $\frac{1}{n}A$ from $S$.

Here is a local description of $\radice[n]S$. Suppose that there is a chart $P \arr A(S)$, where $P$ is a fine saturated monoid. The embedding $P \subseteq \frac{1}{n}P$ gives compatible maps $\bA(\frac{1}{n}P) \arr \bA(P)$ and $\bG(\frac{1}{n}P) \arr \bG(P)$, which induce a morphism of stacks $\cA(\frac{1}{n}P) \arr \cA(P)$. The log structure on $\radice[n]S$, with sheaf of monoids equal to the pullback of $\frac{1}{n}A$, gives a map $\radice[n]S \arr \cA(\frac{1}{n}P)$ with an isomorphism between the two composites $\radice[n]S \arr \cA(\frac{1}{n}P) \arr \cA(P)$ and $\radice[n]S \arr S \arr \cA(P)$.

\begin{proposition}\label{prop:kummer.chart}
The diagram
   \[
   \xymatrix{
   {}\radice[n]S \ar[r]\ar[d] & {}\cA(\frac{1}{n}P) \ar[d]\\
   S \ar[r] & {}\cA(P)
   }
   \]
is cartesian.
\end{proposition}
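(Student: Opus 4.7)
The plan is to verify that both sides of the diagram represent equivalent functors on $\esp$; since both are stacks, it suffices to describe objects over a test space $T\in \esp$ naturally and check the comparison is an equivalence.

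First, I would observe that since $P \arr A(S)$ is a chart, the induced homomorphism $\frac{1}{n}P \arr (\frac{1}{n}A)(S)$ is again a chart. This is essentially tautological: multiplication by $n$ identifies $P$ with $\frac{1}{n}P$ and $A$ with $\frac{1}{n}A$ as abstract sheaves of monoids, and the chart condition is preserved. Consequently, for every arrow $f\colon T \arr S$ the pulled-back map $\frac{1}{n}P \arr f^{-1}(\frac{1}{n}A)(T)$ is still a chart, possibly after refining $T$ by a cover.

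Next, I would construct a comparison morphism $\Phi\colon \radice[n]S \arr S \times_{\cA(P)} \cA(\frac{1}{n}P)$ as follows. An object of $\radice[n]S$ over $T$ is a triple $(f, N, \alpha)$ with $N\colon f^{-1}(\frac{1}{n}A) \arr \Div_{T}$ a DF log structure and $\alpha$ an isomorphism of its restriction to $f^{-1}A$ with $f^{-1}(A, L)$. Send $(f, N, \alpha)$ to $(f, \psi)$, where $\psi\colon \frac{1}{n}P \arr \Div(T)$ is the composite of the chart $\frac{1}{n}P \arr f^{-1}(\frac{1}{n}A)(T)$ with $N$; the datum $\alpha$ supplies the $2$-cell making this into a point of the fiber product over $\cA(P)$.

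For the inverse $\Psi$, start with $(f, \psi, \beta)$ in the fiber product. Sheafify $\psi$ and kill its monoidal kernel via Propositions 2.4 and 2.10 of \cite{borne-vistoli} to obtain a DF log structure $(B, \tilde\psi)$ on $T$. Using the trivial-kernel condition together with the fact that $\frac{1}{n}P \arr f^{-1}(\frac{1}{n}A)$ is a chart (and that $\frac{1}{n}P$ is fine saturated, hence torsion-free, so Proposition \ref{prop:stack.DF.charts} applies), the kernel of the chart map agrees with the monoidal kernel of $\psi$; this gives a canonical identification $B \cong f^{-1}(\frac{1}{n}A)$. Restricting $\tilde\psi$ to $f^{-1}A$ and composing with $\beta$ produces the required isomorphism with $f^{-1}(A, L)$, yielding an object of $\radice[n]S(T)$.

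The main obstacle I anticipate is the canonical identification $B \cong f^{-1}(\frac{1}{n}A)$, i.e.\ checking that the kernel killed in the associated-DF-log-structure construction coincides with the kernel of the chart homomorphism. Once this is established, the verification that $\Phi$ and $\Psi$ are quasi-inverse becomes formal: a DF log structure equipped with a chart is determined up to unique isomorphism by the corresponding symmetric monoidal functor out of the chart monoid, and the compatibility 2-cells on both sides translate into each other via $\alpha \leftrightarrow \beta$.
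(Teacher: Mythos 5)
Your outline is correct, and it is essentially the argument the paper is deferring to when it declares the proof ``straightforward'' and points to \cite[Proposition 4.13]{borne-vistoli}: compare objects over a test space $T$, using that a DF log structure equipped with a chart is recovered (Proposition 3.6 of this paper) from the induced symmetric monoidal functor on the chart monoid. The one step where your justification as written does not close is the identification of the monoidal kernel of $\psi\colon \frac{1}{n}\underline{P}\arr\Div_{T}$ with the kernel of the chart $\frac{1}{n}\underline{P}\arr f^{-1}(\frac{1}{n}A)$: neither the trivial-kernel condition nor Proposition \ref{prop:stack.DF.charts} (which concerns the presentation $\cA(P)\simeq[\bA(P)/\bG(P)]$ and is not needed anywhere in this proof) yields it directly, and you correctly flag it as the main obstacle without actually supplying the argument. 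What is needed are two elementary observations. First, a section $x$ of $[\cO/\cO^{\times}]$ is invertible if and only if $x^{\otimes n}$ is (in any commutative monoid, $s^{n}$ invertible forces $s$ invertible); hence a local section $a$ of $\frac{1}{n}\underline{P}$ lies in $\ker\psi$ if and only if $na\in\underline{P}$ does, and on $\underline{P}$ the $2$-cell $\beta$ together with the trivial-kernel condition on $f^{-1}L$ identifies $\ker(\psi|_{\underline{P}})$ with the preimage of $0$ under the original chart. Second, since $f^{-1}(\frac{1}{n}A)$ is sharp and integral, $n\cdot\phi_{n}(a)=0$ forces $\phi_{n}(a)=0$, so the two kernels coincide and $B\cong f^{-1}(\frac{1}{n}A)$ as claimed. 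With that inserted, your construction of $\Phi$ and $\Psi$ and the quasi-inverse check go through as you describe.
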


The proof is straightforward (cfr. \cite[Proposition 4.13]{borne-vistoli}), and the analogous result holds for Kummer extensions $P\to Q$ not necessarily of the form $P\subseteq \frac{1}{n}P$.

In particular, if the chart $S\to \cA(P)$ factors through a Kato chart $S\to \bA(P)$, we have an isomorphism $\radice[n]{S}=[S\times_{\bA(P)} \bA(\frac{1}{n}P)/\mu_n(P)]$, where $\mu_n(P)$ is the group object $\bG(C_n(P))$ associated with the cokernel $C_n(P)$ of the homomorphism $P^\gp\to \frac{1}{n}P^\gp$, and the action on $S\times_{\bA(P)} \bA(\frac{1}{n}P)$ is induced by the action on the second factor.

Given a sheaf of integral monoids $A$ on $S$, consider the submonoid $A_{\bQ} \arr A^{\gp}\otimes\bQ$, defined as the subsheaf of sections $a$ of $A^{\gp}\otimes\bQ$ with the property that locally on $S$ there is a positive natural number $q$ such that $qs$ is in $A \subseteq A^{\gp}\otimes\bQ$. The following is the analogue of \cite[Definition~3.3]{TV}.

\begin{definition}
The \emph{infinite root stack} $\radice[\infty]{(S, A, L)} = \radice[\infty]S$ is $\radice[A_{\bQ}]{(S, A, L)}$. 
\end{definition}

We have the following alternate characterization. The embeddings $\frac{1}{n}A \subseteq A_{\bQ}$ induce compatible functors $\radice[\infty]S \arr \radice[n]S$, and therefore a morphism $\radice[\infty]S \arr \projlim_{n}\radice[n]S$ (here the indexing system for the projective limit is the set of positive integers, ordered by divisibility). The following is a straightforward generalization of \cite[Proposition~3.5]{TV}.

\begin{proposition}
The morphism $\radice[\infty]S \arr \projlim_{n}\radice[n]S$ above is an equivalence of stacks over $\esp$.
\end{proposition}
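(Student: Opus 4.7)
The plan is to show that the natural functor $\Phi\colon \radice[\infty]S \arr \projlim_{n}\radice[n]S$ is an equivalence of stacks fibered in groupoids over $\esp$ by verifying full faithfulness and essential surjectivity of its $T$-sections for each $T \in \esp$. The essential input is the fact that, as a subsheaf of $A^{\gp}\otimes \bQ$, one has
\[
A_{\bQ} \;=\; \varinjlim_{n} \tfrac{1}{n}A
\]
where the colimit is taken in the category of sheaves of monoids on $\fX_{S}$ over the directed set of positive integers ordered by divisibility, with transition maps induced by the inclusions $\frac{1}{n}A \subseteq \frac{1}{m}A$ whenever $n \mid m$. Since $f^{-1}$ commutes with colimits, the same holds after pulling back along any $f\colon T \arr S$.

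For essential surjectivity, start from a compatible system $(f, N_{n}, \alpha_{n})_{n}$ in $\projlim_{n}\radice[n]S(T)$; compatibility means that each $N_{m}$ restricts (up to coherent isomorphism) to $N_{n}$ on $f^{-1}(\frac{1}{n}A)$ for $n\mid m$, and similarly for the $\alpha_{n}$. By the universal property of the colimit in the $2$-categorical sense, these data glue to a symmetric monoidal functor $N\colon f^{-1}A_{\bQ} \arr \Div_{T}$ restricting to each $N_{n}$. One then checks that $N$ has trivial kernel: a local section $a \in A_{\bQ}(U)$ with $N(a)$ invertible is, on some cover, the image of an element of $\frac{1}{n}A$ for some $n$, and the trivial-kernel hypothesis on $N_{n}$ forces $a = 0$ locally, hence globally. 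Taking $\alpha = \alpha_{1}$ produces the required lift $(f, N, \alpha) \in \radice[\infty]S(T)$, which clearly maps to the original compatible system.

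For full faithfulness, the morphisms in $\radice[\infty]S(T)$ between $(f, N, \alpha)$ and $(f, N', \alpha')$ are monoidal isomorphisms $N \Rightarrow N'$ compatible with $\alpha, \alpha'$, and analogously for $\projlim_{n} \radice[n]S(T)$. Again using that $f^{-1}A_{\bQ}$ is the filtered colimit of the $f^{-1}(\frac{1}{n}A)$, such a monoidal isomorphism corresponds bijectively to a compatible family of monoidal isomorphisms $N_{n} \Rightarrow N'_{n}$, which is exactly a morphism in the projective limit.

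The only real obstacle is the $2$-categorical colimit argument for symmetric monoidal functors into the stack $\Div_{T}$: one needs to know that symmetric monoidal functors out of a filtered colimit of sharp sheaves of monoids assemble from a compatible system on each piece. This is formally analogous to the argument in \cite[Proposition~3.5]{TV}; in that reference the target was $\Div_{X}$ on a log scheme, and the proof depends only on the fact that $\Div$ is fibered in groupoids and stable under the relevant $2$-limits, both of which hold verbatim in our axiomatic setting. No use is made of the specific nature of $\esp$ beyond the fact that $\Div$ is a stack on it, so the argument transfers directly.
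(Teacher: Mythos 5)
Your argument is correct and follows essentially the same route as the paper, which gives no written proof but simply declares the statement a straightforward generalization of \cite[Proposition~3.5]{TV}: the key point in both cases is that $A_{\bQ}$ is the filtered colimit of the subsheaves $\frac{1}{n}A$, so that symmetric monoidal functors out of $f^{-1}A_{\bQ}$ into the stack $\Div_{T}$ are assembled by descent from compatible systems on the pieces. Your write-up merely makes explicit the details that the paper delegates to that reference.
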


It should be remarked that $\radice[\infty]S$ is not always well behaved; when the category $\esp$ is too small, for example it does not admit countable projective limits, then $\radice[\infty]S$ may have too few objects (see \cite[Remark 2.14]{TVnew}
). In these cases one should define the infinite root stack to be the projective system given by the $\radice[n]S$.

\section{Examples}\label{sec:2}
To conclude, we briefly discuss some concrete situations where the formalism developed in this paper applies, and one application, described in detail in \cite{TVnew}.

From now on log structures will be fine and saturated. In particular we can find local charts from monoids that are fine, saturated and sharp, hence torsion-free. 
In the analytic and topological case, root stacks will be analytic and topological stacks respectively. These are the analogue, in the corresponding setting, of algebraic stacks. For a brief discussion, see for example \cite[Appendix A]{knvsroot} or \cite[Section 2.3]{TVnew}.

\subsection*{Algebraic}

In the algebraic case we are taking $\esp$ to be the the category of schemes (or more generally algebraic spaces) over a ring $k$ (or even a more general base) with the \'etale topology, and $\bA^1_k$ as monoid object (the operation is given by multiplication). The scheme $\bA(P)$ is $\Spec k[P]$, and the stack $\cA(P)$ is the quotient $[\Spec k[P]/\widehat{P}]$, where $\widehat{P}$ is the diagonalizable group associated with $P^\gp$ (these are exactly the stacks of charts of \cite{borne-vistoli}).

If $X$ is a fine saturated log algebraic space, the root stack $\radice[n]{X}$ is an algebraic stack (Deligne--Mumford in good characteristic), and has local presentations where $X$ has a Kato chart $X\to \Spec k[P]$, as the quotient $[X \times_{\Spec k[P]}\Spec k[\frac{1}{n}P]/\mu_n(P)]$, where $\mu_n(P)$ is the Cartier dual of the cokernel of $P^\gp\to \frac{1}{n}P^\gp$ and the action on $X \times_{\Spec k[P]}\Spec k[\frac{1}{n}P]$ is induced by the one on the second factor (cfr. \cite[Proposition 4.13]{borne-vistoli}).

In this case the infinite root stack $\radice[\infty]X$ is a pro-algebraic stack, and has been studied extensively in \cite{TV}.

\subsection*{Analytic}

In this case, in the notation of (\ref{sec:charts.spaces}) we are considering $\esp$ to be the category of complex analytic spaces endowed with the classical topology, and as monoid object the analytic space $\bC$ (with multiplication). The realization $\bA(P)$ of a monoid $P$ is the analytic space $(\Spec \bC[P])_\an$, and the stack $\cA(P)$ is the quotient $[(\Spec \bC[P])_\an/\widehat{P}_\an]$.

In analogy to the algebraic case, it is not hard to prove that the stack $\Div_X^\an=[\cO_X/\cO_X^\times]=[\bC/\bC^\times]_X$ parametrizes pairs $(L,s)$ of a holomorphic line bundle with a global section, over the classical site $\cA_X$.

In this case if $X$ is a fine and saturated log analytic space, for every $n$ the root stack $\radice[n]{X}$ is an analytic Deligne--Mumford stack, and locally where $X$ has a Kato chart $X\to (\Spec \bC[P])_\an$ is isomorphic to the quotient $[X \times_{(\Spec\bC[P])_\an}(\Spec\bC[\frac{1}{n}P])_\an/(\mu_n(P))_\an]$, where $\mu_n(P)$ is the Cartier dual of the cokernel of $P^\gp\to \frac{1}{n}P^\gp$ and the action on $X \times_{(\Spec\bC[P])_\an}(\Spec\bC[\frac{1}{n}P])_\an$ is induced by the one on the second factor.

\begin{remark}\label{rmk:compatible.analytification}
Let $X$ be a scheme locally of finite type over $\bC$, and consider the analytification $X_\an$, a complex analytic space. Then there is a morphism of monoided topoi $(X_\an,\cO_{X_\an})\to (X_\et,\cO_{X_\et})$, along which we can pullback log structures, as described at the end of Section \ref{sec:topoi}. It is clear that if $X\to \Spec \bC[P]$ is a Kato chart for $X$, the induced $X_\an\to (\Spec \bC[P])_\an$ will be a Kato chart for $X_\an$.

Moreover, note that by the above description and since analytification respects colimits, in presence of a Kato chart the formation of root stacks is compatible with analytification, in the sense that $(\radice[n]{X})_\an\cong \radice[n]{X_\an}$.

In fact, this is true for any fine saturated $X$, see \cite[Proposition 2.15]{TVnew}.
\end{remark}

\subsection*{Topological}

For the topological case we take as $\esp$ the category of topological spaces with the classical topology, and as monoid object the topological space $\bC$, with multiplication. One can consider variants of this by picking a different monoid object, for example $\bR$ with multiplication. The one we consider here is more natural for some purposes, because a log structure on a complex analytic space (in the above sense) will induce a ``topological'' log structure on the underlying topological space.

The realization $\bA(P)$ of a monoid $P$ is the topological space $(\Spec \bC[P])_\top$ (i.e. the space of $\bC$-valued points of $\Spec \bC[P]$, equipped with the analytic topology), and the stack $\cA(P)$ is the quotient $[(\Spec \bC[P])_\top/\widehat{P}_\top]$. 
In this setting the stack $[\cO/\cO^\times]$ is the quotient stack $\Div_T^\top=[\bC/\bC^\times]_T$ on the classical site of the topological space $T$ (here $\bC$ and $\bC^\times$ are seen just as topological objects), which turns out to parametrize continuous complex line bundles with a global section, in analogy with the algebraic and analytic cases.

If $T$ is a fine saturated log topological space, for every $n$ the $n$-th root stack $\radice[n]{T}$ is a topological Deligne--Mumford stack, and where $T$ has a Kato chart $T\to (\Spec \bC[P])_\top$ is isomorphic to the quotient stack $[T \times_{(\Spec \bC[P])_\top}(\Spec \bC[\frac{1}{n}P])_\top/(\mu_n(P))_\top]$, where $\mu_n(P)$ is the Cartier dual of the cokernel of $P^\gp\to \frac{1}{n}P^\gp$ and the action on $T \times_{(\Spec \bC[P])_\top}(\Spec \bC[\frac{1}{n}P])_\top$ is given by the one on the second factor.

The same observations of Remark \ref{rmk:compatible.analytification} apply to this case as well.

\subsection*{Positive topological spaces}

The formalism applies also to the cases considered in \cite{molcho}. For example, assume that $\esp$ is the category of topological spaces, and the monoid object is $\bR_{\geq 0}$ with multiplication (cfr. \cite[Section 5.7]{molcho}). In this case, the morphism $\alpha$ of a log structure takes value in the sheaf of real-valued positive functions of a space $T$, and the stack $\Div_S$ on an object $S\in \esp$ is the quotient $[\bR_{\geq 0}/\bR_{>0}]|_S$. This parametrizes real oriented line bundles.

The realization $\bA(P)$ of the monoid $P$ in $\esp$ is the space $\bR_{\geq 0}(P)=\Hom(P,\bR_{\geq 0})$, and the stack $\cA(P)$ is the quotient $[\bR_{\geq 0}(P)/\bR_{> 0}(P)]$. Note in particular that, by Proposition \ref{prop:stack.DF.charts}, for $P$ fine and torsion-free there is an equivalence between maps $S\to [\bR_{\geq 0}(P)/\bR_{> 0}(P)]$ and symmetric monoidal functors $P\to [\bR_{\geq 0}/\bR_{>0}]|_S$.

In this case root stacks are uninteresting, because raising to the $n$-th power $\bR_{\geq 0}\to \bR_{\geq 0}$ is a homeomorphism.

\subsection*{Comparison between the Kato--Nakayama space and the infinite root stack}

{To conclude, we give a brief overview of the contents of \cite{TVnew}. The present article was originally included in that paper, as a section introducing the necessary formalism.
}

If $X$ is a log scheme locally of finite type over $\bC$, there is a construction due to Kato and Nakayama \cite{KN} and now called the \emph{Kato--Nakayama space}, of a topological space $X_\log$ that can be though of as the ``analytification'' of the log scheme $X$. In good cases, $X_\log$ is a manifold with boundary, and the boundary lives on the locus of $X$ where the log structure is non-trivial.

The analogy with the infinite root stack can be seen by looking at the fibers of the projections $X_\log\to X$ and $\radice[\infty]{X}\to X$: the fibers of the first map are real tori $(S^1)^r$, where $r$ is the rank of the stalk of the group $\overline{M}^\gp$, and the reduced fibers of the second are classifying stacks $\mathrm B\widehat{\bZ}^r$ for the same $r$. By seeing $S^1$ as the classifying space $\mathrm B\bZ$, we can think of $\mathrm B\widehat{\bZ}^r$ as the profinite completion (in the appropriate sense) of $(S^1)^r$. In fact, as proven in \cite[Theorem 6.4]{knvsroot}, for every fine saturated log scheme locally of finite type over $\bC$, there is a canonical morphism of {topological stacks} $X_\log\to \sqrt[\infty]{X}_\top=\varprojlim_n \radice[n]{X}_\top$, that induces an equivalence between profinite completions.

One way to construct this morphism \cite[Section 3.4]{TVnew} is to find a functorial description of the topological stack $\radice[n]{X}_\top$ obtained from the complex-analytic root stack $\radice[n]{X}$, and show that the Kato--Nakayama space naturally carries an object of the kind parametrized by this stack. As shown in \cite[Section 2.5]{TVnew}, it turns out, quit pleasingly, that $\radice[n]{X}_\top$ can be naturally identified with the \emph{topological} root stack of the topological space $X_\top$, equipped with the induced topological log structure. The need to have a basic theory of this sort of log structures and root stacks on topological and analytic spaces was what drove us to writing things in the generality of the present paper in the first place.


\section*{Conflict of interest statement}

On behalf of all authors, the corresponding author states that there is no conflict of interest.


\bibliographystyle{alpha}
\bibliography{biblio}

\end{document}